\newtheorem{theorem}{Theorem}[section]
\newtheorem{lemma}[theorem]{Lemma}
\newtheorem{corollary}[theorem]{Corollary}
\newtheorem{remark}{Remark}
\newtheorem{example}{Example}
\newcommand{\OA}{\ensuremath{\mathsf{OA}}} 
\newcommand{\B}{\ensuremath{\mathcal{B}}} 
\newcommand{\G}{\ensuremath{\mathcal{G}}} 
\newcommand{\zed}{\ensuremath{\mathbb{Z}}}
\title{Bounds for orthogonal arrays with repeated rows}
\author{Douglas~R.~Stinson\thanks{The author's research is supported by  
NSERC discovery grant RGPIN-03882.}\\
David R.\ Cheriton School of Computer Science\\ University of Waterloo\\
Waterloo, Ontario N2L 3G1, Canada\\
{\tt dstinson@uwaterloo.ca}}
\date{}
\begin{document}
\maketitle

\begin{abstract}
In this expository paper, we mainly study orthogonal arrays (OAs) 
of strength two having a row that is repeated $m$ times. 
It turns out that the Plackett-Burman bound (\cite{PB}) can be strengthened by a factor of $m$ for orthogonal arrays 
of strength two that contain a row that is repeated $m$ times. This is a consequence of 
a more general result due to Mukerjee,  Qian and  Wu \cite{Muk} that applies to orthogonal
arrays of arbitrary strength $t$.

We examine several proofs
of the Plackett-Burman bound and discuss which of these proofs can be strengthened to yield
the aforementioned bound for OAs of strength two with repeated rows. We also briefly discuss
related bounds for $t$-designs, and OAs of strength $t$, when $t > 2$.
\end{abstract}

\section{Introduction}

Balanced incomplete block designs (BIBDs) with repeated blocks have been studied for many years.
A classical result due to Mann \cite{Mann} in 1969 says that a BIBD having a block of multiplicity $m$
satisfies the inequality $b \geq mv$, where $b$ is the number of blocks and $v$ is the
number of points. This is of course a generalization of Fisher's inequality, which states
that $b \geq v$ in any BIBD.

In this expository paper, we mainly study the corresponding problem for orthogonal arrays (OAs) 
of strength two. Here, the analog of Fisher's inequality is the Plackett-Burman bound \cite{PB}, and it
turns out that this bound can be strengthened by a factor of $m$ for orthogonal arrays 
of strength two that contain a row that is repeated $m$ times. This is in fact a consequence of 
a more general result due to Mukerjee,  Qian and  Wu \cite{Muk} that applies to orthogonal
arrays of arbitrary strength $t$ (see Theorem \ref{muk.thm}).

Aside from being an interesting theoretical question, there is also some practical motivation for studying this problem.
In a recent paper, Culus and  Toulouse \cite{CT} discuss an application where it is 
beneficial to construct orthogonal arrays with an $m$-times repeated row, where 
$m$ is as large as possible.

The rest of this paper is organized as follows.
In Section \ref{OA2.sec}, we present a standard proof of the Plackett-Burman bound and 
show how it can be easily modified to yield the desired inequality for OAs having repeated rows
(this is Theorem \ref{repeated.thm}).
In Section \ref{other.sec}, we examine several other proofs
of the Plackett-Burman bound and discuss which of these proofs can be strengthened to yield
the aforementioned bound for OAs of strength two with repeated rows. 
In Section \ref{t>2.sec}, we discuss
related bounds for OAs of strength $t$, when $t > 2$.
Finally, in Section  \ref{t-designs.sec}, we consider 
related bounds for balanced incomplete block designs (BIBDs)  and $t$-designs.

This paper is exclusively concerned with proving bounds. A follow-up paper \cite{CSV}
investigates constructions 
for orthogonal arrays of strength two, having an $m$-times repeated row, that meet the bound
proven in Theorem \ref{repeated.thm} with equality.

\section{Orthogonal arrays of strength 2}
\label{OA2.sec}

Let $k \geq 2$, $n \geq 2$ and $\lambda \geq 1$ be integers.
An \emph{orthogonal array} $\OA_{\lambda} (k,n)$
is a $\lambda  n^2 $ by $k$ array, $A$, with entries from a set
$X$ of cardinality $n$ such that, within any two columns of $A$,
every ordered pair of symbols from $X$ occurs in exactly $\lambda$ rows of $A$.
We often denote the number of rows, $\lambda  n^2$, by $N$.
For much information on orthogonal arrays, see \cite{HSS}.

We first give a standard combinatorial proof of the classical Plackett-Burman bound from 1946.
This proof is similar to the original proof of Fisher's inequality \cite{Fisher}.
The proof technique is sometimes called the ``variance method''.

\begin{theorem}[Plackett-Burman bound (\cite{PB})]
\label{PB.thm}
Let $k \geq 2$, $n \geq 2$ and $\lambda \geq 1$ be integers. 
If there is an $\OA_{\lambda}(k,n)$, then
\[\lambda \geq \frac{k(n-1)+1}{n^2}.\]
\end{theorem}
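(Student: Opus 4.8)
The plan is to carry out the ``variance method'' hinted at in the text. Set $N = \lambda n^2$, let $X$ be the symbol set, and write $A_{r,j}$ for the entry of $A$ in row $r$ and column $j$. A preliminary observation, which I would record first, is the strength-one consequence of the definition: fixing a column $j$ together with a second column and running over all ordered pairs of symbols shows that each symbol of $X$ occurs exactly $\lambda n$ times in column $j$. For each ordered pair of \emph{distinct} rows $(r,s)$, define the coincidence count $a(r,s) = \#\{ j : A_{r,j} = A_{s,j}\}$, the number of columns in which the two rows agree. The whole proof rests on computing the first two moments of $a(r,s)$ over the $N(N-1)$ off-diagonal ordered pairs and then invoking the Cauchy--Schwarz inequality.

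The two required identities are obtained by double counting, each using a different facet of the orthogonality hypothesis. For $S_1 = \sum_{r \neq s} a(r,s)$, I would sum over columns: in a fixed column each of the $n$ symbols contributes $\lambda n (\lambda n - 1)$ ordered pairs of distinct equal-symbol rows, giving $S_1 = k N(\lambda n - 1)$ and using only the strength-one count above. For $S_2 = \sum_{r \neq s} a(r,s)^2$, I would expand the square as a sum over ordered pairs of columns $(j,j')$. The diagonal terms $j = j'$ reproduce $S_1$, while each off-diagonal term $j \neq j'$ counts ordered pairs of distinct rows agreeing in two prescribed columns; here strength two enters, since each ordered symbol pair occurs in exactly $\lambda$ rows, contributing $\lambda(\lambda - 1)$ per symbol pair. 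This yields $S_2 = k N(\lambda n - 1) + k(k-1) n^2 \lambda (\lambda - 1)$.

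Finally I would feed these into Cauchy--Schwarz in the form $S_1^2 \leq N(N-1)\, S_2$, which holds because $a(r,s)$ is a nonnegative function on $N(N-1)$ points. Substituting the two identities and clearing the common positive factors, the inequality simplifies to the factored form $N(n-1)\bigl(N - k(n-1) - 1\bigr) \geq 0$; since $N > 0$ and $n > 1$, this forces $N \geq k(n-1)+1$, that is, $\lambda n^2 \geq k(n-1)+1$, which is the claimed bound after dividing by $n^2$. I expect the main obstacle to be twofold: first, the careful bookkeeping in the $S_2$ computation, where the $j = j'$ and $j \neq j'$ contributions must be separated and matched to the correct strength-one and strength-two counts; and second, ensuring the crucial ``$+1$'' survives. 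The latter is exactly why one works with the off-diagonal pairs and applies Cauchy--Schwarz directly: a cruder argument that centers the coincidence counts at the global mean $k/n$ (including the diagonal, where $a(r,r) = k$) only yields the weaker $\lambda n^2 \geq k(n-1)$. As a cross-check I would note the alternative linear-algebra proof, in which the centered symbol-indicator vectors $v_{j,x} - \tfrac{1}{n}\mathbf{1}$ together with $\mathbf{1}$ form $k(n-1)+1$ mutually orthogonal (hence independent) vectors in $\mathbb{R}^N$; this gives the same bound and confirms the constant.
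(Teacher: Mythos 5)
Your proof is correct: the moment computations $S_1 = kN(\lambda n - 1)$ and $S_2 = kN(\lambda n -1) + k(k-1)n^2\lambda(\lambda-1)$ both check out, and $S_1^2 \leq N(N-1)S_2$ does reduce (up to positive factors) to $(n-1)\bigl(N - k(n-1) - 1\bigr) \geq 0$, which is the Plackett--Burman bound. It is the same ``variance method'' the paper uses, but with a genuinely different decomposition: the paper distinguishes a single reference row (relabelled to $1\,1\cdots 1$), lets $a_i$ be the number of agreements of row $i$ with that row, and applies $\sum_{i=1}^{N-1}(a_i - \overline{a})^2 \geq 0$; you instead symmetrize over all $N(N-1)$ ordered pairs of distinct rows and apply Cauchy--Schwarz to the coincidence counts $a(r,s)$. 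The two routes produce the identical polynomial inequality (your $(\lambda n - 1) + (k-1)(\lambda-1)$ equals the paper's $k(\lambda-1) + \lambda(n-1)$), so nothing is lost here. What the paper's asymmetric version buys, and what your global version obscures, is the refinement to repeated rows: fixing the reference row and restricting the variance computation to the other $N-m$ rows is exactly what yields Theorem \ref{repeated.thm}, whereas the all-pairs average does not localize to a distinguished $m$-times repeated row in any obvious way. Your closing cross-check via the $k(n-1)+1$ mutually orthogonal vectors is the proof the paper gives in Section \ref{other.sec} (there over $\mathbb{C}$ using characters rather than centered real indicators), and your warning about losing the ``$+1$'' by including the diagonal pairs is a fair one.
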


\begin{proof}
Relabel the symbols in each column of the OA so the last row of $A$ is $1 \: 1\: \cdots \: 1$.
For $1 \leq i \leq N-1$, let $a_i$ denote the number of
``$1$''s in row $i$ of $A$.
Then
\begin{eqnarray*}
\sum _{i=1}^{N-1} a_i &=& k(\lambda n - 1), \\
\sum _{i=1}^{N-1} a_i(a_i-1) &=& k(k-1)(\lambda  - 1), \quad \text{and} \\
\sum _{i=1}^{N-1} {a_i}^2 &=& k(k(\lambda - 1) + \lambda(n-1)) .
\end{eqnarray*}
Define 
\[ \overline{a} = \frac{k(\lambda n - 1)}{N-1}.\]
Then
\begin{eqnarray*}
0 & \leq & \sum _{i=1}^{N-1} (a_i - \overline{a})^2 \\
&=& \sum _{i=1}^{N-1} {a_i}^2 - 2 \overline{a} \sum _{i=1}^{N-1} a_i +
\overline{a}^2 (N-1) \\
&=& k(k(\lambda - 1) + \lambda(n-1)) - \frac{k^2(\lambda n -1)^2}{\lambda n^2 - 1}.
\end{eqnarray*}
Therefore, 
\[ k (\lambda n - 1)^2 \leq (\lambda n^2 - 1)(k(\lambda - 1) + \lambda(n-1)).\]
This simplifies to yield
\[k \leq \frac{\lambda n^2 - 1}{n-1}.\]
Equivalently, 
\[\lambda \geq \frac{k(n-1)+1}{n^2}.\]
\end{proof}

Using the identical proof technique, we also have the following theorem.

\begin{theorem}
\label{repeated.thm}
Let $k \geq 2$, $n \geq 2$ and $\lambda \geq 1$ be integers.
If there is an $\OA_{\lambda}(k,n)$ containing a row that is repeated $m$ times, then
\[\lambda \geq \frac{m(k(n-1)+1)}{n^2}.\]
\end{theorem}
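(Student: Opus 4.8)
The plan is to repeat the variance-method argument of Theorem \ref{PB.thm} almost verbatim, the only change being that I strip off all $m$ copies of the repeated row rather than a single row. First I would relabel the symbols in each column so that the repeated row becomes $1\:1\:\cdots\:1$; after this relabelling the all-ones row occurs exactly $m$ times. I then discard these $m$ identical rows and work with the remaining $N-m = \lambda n^2 - m$ rows, letting $a_i$ denote the number of $1$'s in row $i$ for $1 \le i \le N-m$.

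Next I would recompute the three basic sums, tracking how each count drops when $m$ all-ones rows (rather than one) are removed. Since every symbol occurs $\lambda n$ times in each column, the total number of $1$'s is $k\lambda n$; removing $m$ all-ones rows, each with $k$ ones, gives $\sum a_i = k(\lambda n - m)$. Likewise each ordered pair of distinct columns contains the pattern $(1,1)$ in exactly $\lambda$ rows, and each discarded row contributes one such occurrence in every one of the $k(k-1)$ ordered column pairs, so $\sum a_i(a_i - 1) = k(k-1)(\lambda - m)$; adding the two sums yields $\sum a_i^2 = k(k(\lambda-m)+\lambda(n-1))$. These identities hold even though the truncated array is no longer an orthogonal array, since they follow from the global counts minus the removed rows. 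Note also that the repetition already forces $\lambda \ge m$, so these quantities are non-negative.

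With $\overline{a} = k(\lambda n - m)/(\lambda n^2 - m)$, the inequality $0 \le \sum_{i=1}^{N-m}(a_i-\overline a)^2 = \sum a_i^2 - (\sum a_i)^2/(N-m)$ then produces
\[ k(\lambda n - m)^2 \le (\lambda n^2 - m)\big(k(\lambda - m)+\lambda(n-1)\big). \]
Expanding both sides and collecting terms, I expect the difference of the two sides to factor as $\lambda(n-1)\big(\lambda n^2 - m(k(n-1)+1)\big)$. Since $\lambda \ge 1$ and $n \ge 2$, the factor $\lambda(n-1)$ is strictly positive, so the bracketed factor must be non-negative, which is exactly $\lambda n^2 \ge m(k(n-1)+1)$, i.e.\ the claimed inequality.

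The argument is essentially a bookkeeping exercise, so there is no deep obstacle; the one place to be careful is the counting of the quadratic term, making sure that removing $m$ copies of the all-ones row subtracts precisely $mk(k-1)$ from the $(1,1)$-pair count. The subsequent algebraic simplification is the only other delicate step, and the key reassurance is that setting $m=1$ must recover the Plackett-Burman factorization underlying Theorem \ref{PB.thm}.
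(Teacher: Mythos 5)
Your proposal is correct and follows essentially the same variance-method argument as the paper's own proof: remove all $m$ copies of the repeated all-ones row, compute the same three sums, and apply the nonnegativity of $\sum (a_i-\overline a)^2$; your claimed factorization $\lambda(n-1)\bigl(\lambda n^2 - m(k(n-1)+1)\bigr)$ of the difference of the two sides checks out and yields the stated bound.
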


\begin{proof}
Reorder the rows so the last $m$ rows are identical. 
Then relabel the symbols in each column so the last  $m$ rows of $A$ are $1 \: 1\: \cdots \: 1$.
For $1 \leq i \leq N-m$, let $a_i$ denote the number of
``$1$''s in row $i$ of $A$.
Then
\begin{eqnarray*}
\sum _{i=1}^{N-m} a_i &=& k(\lambda n - m),  \\
\sum _{i=1}^{N-m} a_i(a_i-1) &=& k(k-1)(\lambda  - m), \quad \text{and} \\
\sum _{i=1}^{N-m} {a_i}^2 &=& k(k(\lambda - m) + \lambda(n-1)) .
\end{eqnarray*}
Define 
\[ \overline{a} = \frac{k(\lambda n - m)}{N-m}.\]
Then
\begin{eqnarray*}
0 & \leq & \sum _{i=1}^{N-m} (a_i - \overline{a})^2 \\
&=& \sum _{i=1}^{N-m} {a_i}^2 - 2 \overline{a} \sum _{i=1}^{N-m} a_i +
\overline{a}^2 (N-m) \\
&=& k(k(\lambda - m) + \lambda(n-1)) - \frac{k^2(\lambda n -m)^2}{\lambda n^2 - m}.
\end{eqnarray*}
Therefore, 
\[ k (\lambda n - m)^2 \leq (\lambda n^2 - m)(k(\lambda - m) + \lambda(n-1)).\]
This simplifies to yield
\[k \leq \frac{\lambda n^2 - m}{m(n-1)}.\]
Equivalently, 
\[\lambda \geq \frac{m(k(n-1)+1)}{n^2}.\]
\end{proof}


The following corollary is immediate.

\begin{corollary}
\label{repeated.cor}
Let $k \geq 2$, $n \geq 2$ and $\lambda \geq 1$ be integers.
If there is an $\OA_{\lambda}(k,n)$ containing a row that is repeated $m$ times, then
\[m \leq \frac{\lambda n^2}{k(n-1)+1}.\]
\end{corollary}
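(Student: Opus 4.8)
The plan is to derive this directly from Theorem~\ref{repeated.thm}, since the two inequalities are algebraically equivalent and no new combinatorial input is needed. First I would invoke Theorem~\ref{repeated.thm}: under the stated hypotheses, the existence of an $\OA_{\lambda}(k,n)$ with an $m$-times repeated row yields
\[\lambda \geq \frac{m(k(n-1)+1)}{n^2}.\]
Multiplying both sides by $n^2 > 0$ gives $\lambda n^2 \geq m(k(n-1)+1)$.

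Next I would divide both sides by $k(n-1)+1$ and isolate $m$ on the left. The only point that genuinely needs checking is that this divisor is strictly positive, so that the direction of the inequality is preserved: since $k \geq 2$ and $n \geq 2$, we have $k(n-1)+1 \geq 3 > 0$. Dividing therefore yields
\[m \leq \frac{\lambda n^2}{k(n-1)+1},\]
which is exactly the claimed bound.

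There is no substantive obstacle here, which is why the statement is labelled as an immediate corollary: its entire content is already packaged inside Theorem~\ref{repeated.thm}, and the passage between the two forms is a one-line rearrangement. The only subtlety to guard against is the sign of the divisor $k(n-1)+1$, and this is guaranteed by the standing hypotheses $k \geq 2$ and $n \geq 2$.
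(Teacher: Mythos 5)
Your proposal is correct and matches the paper's intent exactly: the paper gives no explicit proof because it declares the corollary immediate from Theorem~\ref{repeated.thm}, and your one-line rearrangement (multiplying by $n^2$ and dividing by the positive quantity $k(n-1)+1$) is precisely that immediate step.
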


\begin{remark}
The total number of rows in the orthogonal array is $N = \lambda n^2$.
Thus the inequality from Corollary \ref{repeated.cor} can also be written as
\[m \leq \frac{N}{k(n-1)+1}.\]
\end{remark}

\begin{example}
Suppose we take 
$n=2$, $k=4$ and $\lambda = 2$ in Corollary \ref{repeated.cor}.
Then
\[m \leq \frac{8}{4(2-1)+1} = \frac{8}{5}.\]
Since $m$ is an integer, an $\OA_2(4,2)$ cannot have any repeated rows.

If we take $n=2$, $k=4$ and $\lambda = 3$ in Corollary \ref{repeated.cor},
then we see that
\[m \leq \frac{12}{4(2-1)+1} = \frac{12}{5}.\]
Since $m$ is an integer, an $\OA_3(4,2)$ cannot contain a row that is repeated three times.
\end{example}

\begin{example}
An $\OA_3(5,3)$ is presented in \cite[Table 2]{CT}.
This orthogonal array contains a row that is repeated twice.
If we take $n=3$, $k=5$ and $\lambda = 3$ in Corollary \ref{repeated.cor},
we see that
\[m \leq \frac{27}{5(3-1)+1} = \frac{27}{11}.\]
Therefore, an $\OA_3(5,3)$  cannot contain a row that is repeated three times.
\end{example}

We close this section by observing that the case of equality in the bound proven in
Theorem \ref{repeated.thm} can easily be characterized.

\begin{corollary}
\label{abar.lem}
Let $k \geq 2$, $n \geq 2$ and $\lambda \geq 1$ be integers.
Suppose there is an  $\OA_{\lambda}(k,n)$, say $A$, containing a row 
$1 \; 1 \cdots 1$ that is repeated $m$ times, where 
$m = \lambda n^2/(k(n-1)+1).$
Then every other row of $A$ contains exactly 
\begin{equation}
\label{a.eq}
\overline{a} = \frac{k(\lambda n - m)}{\lambda n^2-m}
\end{equation} 
occurrences of the symbol $1$.
\end{corollary}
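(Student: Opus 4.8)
The plan is to extract the equality case directly from the proof of Theorem \ref{repeated.thm}. The key observation is that the chain of deductions in that proof contains exactly one inequality, namely $0 \leq \sum_{i=1}^{N-m}(a_i - \overline{a})^2$; every subsequent step is an algebraic identity or simplification. Consequently, equality in the final bound $m \leq \lambda n^2/(k(n-1)+1)$ of Corollary \ref{repeated.cor} is equivalent to equality in this single inequality, and that is what I would exploit.

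First I would record the bookkeeping identity that makes the two definitions of $\overline{a}$ agree. Since the total number of rows is $N = \lambda n^2$, we have $N - m = \lambda n^2 - m$, and hence the mean defined in the proof of Theorem \ref{repeated.thm}, namely $\overline{a} = k(\lambda n - m)/(N-m)$, is literally the quantity $k(\lambda n - m)/(\lambda n^2 - m)$ appearing in (\ref{a.eq}). So there is nothing to reconcile beyond substituting $N = \lambda n^2$.

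Next, invoking the hypothesis $m = \lambda n^2/(k(n-1)+1)$, which is precisely the equality case of the bound, I would retrace the simplifications in the proof of Theorem \ref{repeated.thm} in reverse to conclude that the lone inequality there in fact holds with equality, i.e. $\sum_{i=1}^{N-m}(a_i - \overline{a})^2 = 0$. Because this is a sum of squares of real numbers, every summand must vanish, forcing $a_i = \overline{a}$ for each $i$ with $1 \leq i \leq N - m$. As $a_i$ counts the occurrences of the symbol $1$ in row $i$, this says that every row other than the $m$ identical rows $1\,1\cdots 1$ contains exactly $\overline{a}$ copies of the symbol $1$, which is the desired conclusion.

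I expect no genuine obstacle here, since the result is an immediate by-product of the variance method: the only source of slack in the argument is the sum of squares, so saturating the bound pins down each $a_i$. The single point requiring care is the identification $N - m = \lambda n^2 - m$, ensuring that the $\overline{a}$ of the theorem and the $\overline{a}$ of (\ref{a.eq}) coincide; once that is observed, the ``sum of squares equals zero'' step finishes the proof in one line.
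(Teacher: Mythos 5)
Your proposal is correct and follows exactly the paper's own argument: the paper likewise observes that equality in the bound forces $\sum_{i=1}^{N-m}(a_i-\overline{a})^2=0$ in the proof of Theorem \ref{repeated.thm}, hence $a_i=\overline{a}$ for all $i$. Your additional remark that $N-m=\lambda n^2-m$ reconciles the two expressions for $\overline{a}$ is a harmless elaboration of the same one-line proof.
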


\begin{proof}
In the proof of Theorem \ref{repeated.thm}, in order to obtain equality in the resulting
bound, we must have 
\[ \sum _{i=1}^{N-m} (a_i - \overline{a})^2 = 0,\] 
so $\overline{a} = a_i$ for all $i$.
\end{proof}

\section{Some other proofs}
\label{other.sec}

In this section, we consider other proofs of the Plackett-Burman bound
and investigate whether they can be modified to prove Theorem \ref{repeated.thm}.

\subsection{Constant weight codes}

It is not hard to see that the Plackett-Burman bound and  Theorem \ref{repeated.thm} can be both 
derived as an immediate corollary of a
1962 bound of Johnson \cite[Theorem 3]{Johnson} for constant-weight binary codes. 
Johnson defines $R(\ell,r,\mu)$ to be the maximum number of $0-1$ vectors of length $\ell$
and weight $w$ such that the inner product of any two of the vectors is at most $\mu$. Then he proves the following bound.

\begin{theorem}
\cite[Theorem 3]{Johnson}
\label{johnson.thm}
If $w^2 > \ell \mu$, then
\[ R(\ell,w,\mu) \leq \frac{\ell(w - \mu)}{w^2 - \ell \mu} .\]
\end{theorem}

We describe how to derive Theorem \ref{repeated.thm}  as a corollary of Theorem \ref{johnson.thm}.
Suppose that $A$ is  an $\OA_{\lambda}(k,n)$ in which the last $m$ rows are $1 \: 1\: \cdots \: 1$.
Delete these $m$ rows and replace every symbol other than $1$ by $0$.
Consider the resulting set of $k$ column vectors of length $\lambda n^2 - m$.
Each of these vectors has weight $\lambda n - m$ and the inner product of any two of these vectors
is equal to $\lambda - m$. From Theorem \ref{johnson.thm}, we see that
\[
 k \leq \frac{(\lambda n^2 - m)\lambda(n-1)}{(\lambda n - m)^2 - (\lambda n^2 - m)(\lambda - m)} 
=\frac{\lambda n^2 - m}{m(n-1)}.
 \]

The above derivation might appear to be simpler than the one we gave in the proof of Theorem \ref{repeated.thm}.
But this is illusory, as the proof of Theorem \ref{johnson.thm} relies on the computation
of a quadratic sum analogous to the one considered in the proof of Theorem \ref{repeated.thm}.

\subsection{Transversal designs}
\label{TD.sec}

Another way to prove the Plackett-Burman bound involves constructing the incidence matrix, $M$, of the
associated transversal design. (For the definition of a transversal design, and
a description of how to transform an orthogonal array into a transversal design,
see \cite[\S 6.6]{Stinson}.) The incidence matrix $M$ has $kn$ columns (corresponding to the $kn$ points in
the transversal design) and $k + \lambda n^2$ rows (where the first $\lambda n^2$ rows correspond to the rows of the OA, 
and the last $k$ rows are associated with the groups of the transversal design). 

It is fairly straightforward 
to show that the rows of $M$ span a real $kn$-dimensional vector space, whence
$k + \lambda n^2 \geq kn$. Then we can observe that one of the rows of $M$ is spanned by the remaining rows,
so the inequality can be strengthened to $k + \lambda n^2 -1 \geq kn$, which is just the Plackett-Burman bound.

Here are the details required to fill in the proof.  Let the transversal design be $(X,\G,\B)$ where
\begin{eqnarray*}
X &=& \{1, \dots , n\} \times \{1, \dots , k\},\\
\G &=& \{G_1, \dots , G_k\},\\
G_j &=& \{1, \dots , n\} \times \{j\},   \text{for $1 \leq j \leq k$, and}\\
\B &=& \{B_1, \dots , B_{N}\}, \text{where $N = \lambda n^2$.}
\end{eqnarray*}
Members of $X$ are termed \emph{points},
 members of $\G$ are called \emph{groups} and  members of $\B$ are referred to as \emph{blocks}.

The incidence matrix $M$ is defined as follows. A row of $M$ is indexed by a block or a group and
a column of $M$ is indexed by a point $x$. The corresponding entry of $M$ is ``1'' if $x$ is a member
of the given block or group, and ``0'' otherwise. 

Each row of the incidence matrix $M$ is a $0-1$ vector of length $kn$. We thus obtain
$k + N$ vectors, which we will denote by 
$\mathbf{B_i}$ ($1 \leq i \leq N$) and $\mathbf{G_j}$ ($1 \leq j \leq k$).
For any point $x \in X$, let $\mathbf{x}$ denote the standard basis vector that has a ``1'' in the
co-ordinate corresponding to $x$. The all-ones vector will be denoted by $\mathbf{u}$.

We list a few obvious equations:
\begin{equation}
\label{eq1}
\sum _{i = 1}^{N} \mathbf{B_i} = \lambda n \mathbf{u}.
\end{equation}
\begin{equation}
\label{eq2}
\sum _{j = 1}^{k} \mathbf{G_j} =  \mathbf{u}.
\end{equation}
For any $x \in G_j$, it holds that
\begin{equation}
\label{eq3}
\lambda \mathbf{G_j} + \sum _{\{i : x \in B_i\}} \mathbf{B_i}  = \lambda \mathbf{u} + (\lambda  n)\mathbf{x}.
\end{equation}

From (\ref{eq1}) and (\ref{eq3}), it is easy to see that
\[\mathbf{x} \in \mathsf{Span} (\mathbf{B_1}, \dots , \mathbf{B_N}, \mathbf{G_1}, \dots , \mathbf{G_k})\]
for any $x \in X$. Then, using (\ref{eq1}) and (\ref{eq2}), it follows that
\[\mathbf{G_k} \in \mathsf{Span} (\mathbf{B_1}, \dots , \mathbf{B_N}, \mathbf{G_1}, \dots , \mathbf{G_{k-1}}).\]
Therefore, 
\[\mathbf{x} \in \mathsf{Span} (\mathbf{B_1}, \dots , \mathbf{B_N}, \mathbf{G_1}, \dots , \mathbf{G_{k-1}})\]
for any $x \in X$. This immediately implies
$k + N -1 \geq kn$, and the Plackett-Burman bound is proven.

\begin{remark}
The above proof is obviously motivated by a similar proof of Fisher's inequality, which can
be found, for example, in \cite[Theorem 1.33]{Stinson}. This basic method, as far as I can determine,
was first used by Ray-Chaudhuri and Wilson in the proof of \cite[Theorem 1]{RCM}.
\end{remark}

\begin{remark}
\label{weak.rem}
Interestingly, there does not seem to an obvious way to modify this proof to yield Theorem \ref{repeated.thm}.
Cleary, if the OA contains $m$ identical rows, then the associated incidence matrix contains $m$ identical rows.
Suppose we delete $m-1$ of the $m$ identical rows in $M$, producing a matrix $M'$. The span of the rows of $M'$ 
is identical to the span of the rows of $M$, from which it immediately follows that
$k + N - m \geq kn$. But this is a much weaker inequality than the one proven in Theorem \ref{repeated.thm}.
\end{remark}

\subsection{Evaluating a determinant}

We consider a slightly modified incidence matrix:
\begin{enumerate}
\item Transpose the $N+ k$ by $nk$ matrix
$M$ defined in Section \ref{TD.sec}. 
\item Adjoin an additional row to the resulting matrix, which contains
0's in the first $N$ columns (which are labelled by blocks) and 1's in the 
last $k$ columns (which are labelled by groups).
\item Multiply all elements in the last $k$ columns by $\sqrt{\lambda}$.
Call the resulting $nk+1$ by $N+k$ matrix $\tilde{M}$.
\end{enumerate}

The following result can be easily verified.

\begin{lemma} The matrix $\tilde{M}$, as defined above, satisfies the matrix equation
\[ \tilde{M}\tilde{M}^T = \lambda J + \mathsf{diag}(\lambda n , \dots , \lambda n , (k-1)\lambda),\]
where $J$ is the all-ones matrix and $\mathsf{diag}()$ is a matrix with the specified entries on
the diagonal and 0's elsewhere.
\end{lemma}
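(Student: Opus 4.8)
The plan is to compute the $(nk+1)\times(nk+1)$ Gram matrix $\tilde{M}\tilde{M}^T$ directly, entry by entry, reading each entry as the inner product of two rows of $\tilde{M}$. First I would fix the combinatorial structure of the rows. After the transposition in Step 1, the first $nk$ rows of $\tilde{M}$ are indexed by the points $x \in X$ of the transversal design of Section \ref{TD.sec}, and the final row is the adjoined ``group-indicator'' row. In the row indexed by $x$, which I will call $\mathbf{r}_x$, the entry in a block column $B_i$ is $1$ if $x \in B_i$ and $0$ otherwise, while the entry in a group column $G_j$ is $\sqrt{\lambda}$ if $x \in G_j$ and $0$ otherwise, the factor $\sqrt{\lambda}$ coming from Step 3. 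The final row $\mathbf{r}_0$ has all block entries equal to $0$ and all group entries equal to $\sqrt{\lambda}$.

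Next I would evaluate $\langle \mathbf{r}_x, \mathbf{r}_y\rangle$ using four standard facts about the transversal design associated with the OA: every point lies in exactly one group; every point lies in exactly $\lambda n$ blocks; two points in a common group lie in no common block; and two points in distinct groups lie in exactly $\lambda$ common blocks (this last is where strength two enters). Splitting off the contribution of the block columns from that of the group columns, the latter carrying a factor $\lambda = (\sqrt{\lambda})^2$, I would check the three cases $x=y$, then $x\neq y$ in the same group, then $x\neq y$ in distinct groups. These respectively yield $\lambda n + \lambda$, $0 + \lambda$, and $\lambda + 0$, so the point-row block of the Gram matrix equals $\lambda J + \lambda n I$, where $I$ is the $nk\times nk$ identity. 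This matches the first $nk$ diagonal entries of the claimed expression together with all the off-diagonal $\lambda$'s among point rows.

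To finish, I would handle the entries involving the final row. For any point $x$, the inner product $\langle \mathbf{r}_x, \mathbf{r}_0\rangle$ receives nothing from the block columns and equals $\lambda\sum_j [x\in G_j] = \lambda$, since $x$ lies in exactly one group; this supplies the remaining off-diagonal $\lambda$'s. The diagonal entry is $\langle \mathbf{r}_0,\mathbf{r}_0\rangle = \sum_{j=1}^{k} (\sqrt{\lambda})^2 = k\lambda$, which I would rewrite as $\lambda + (k-1)\lambda$ to exhibit it as the $J$-contribution $\lambda$ plus the final diagonal entry $(k-1)\lambda$. Assembling these pieces yields exactly $\lambda J + \mathsf{diag}(\lambda n,\dots,\lambda n,(k-1)\lambda)$.

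The computation is routine once the row structure is pinned down, so the only point genuinely requiring care, and the place where the scaling of Step 3 does its work, is the bookkeeping of the $\sqrt{\lambda}$ factors: I must ensure that each coincidence in a group column contributes $\lambda$ rather than $\sqrt{\lambda}$, and in particular that $\langle \mathbf{r}_0,\mathbf{r}_0\rangle = k\lambda$ so that it splits correctly as $\lambda + (k-1)\lambda$. The other ingredient I would be sure to state explicitly is the list of incidence facts for the transversal design, since the verification rests entirely on them.
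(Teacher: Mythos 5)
Your proof is correct and is exactly the routine entry-by-entry verification that the paper leaves to the reader (the paper states only that the lemma ``can be easily verified'' and gives no proof). The four transversal-design incidence facts you isolate are precisely what is needed, and your bookkeeping of the $\sqrt{\lambda}$ scaling, including the split $k\lambda = \lambda + (k-1)\lambda$ for the last diagonal entry, is accurate.
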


The Plackett-Burman bound follows by proving that $\det (\tilde{M}\tilde{M}^T) > 0$.
(There are various ways to do this, but we do not go into the details here.)
Then, because  $\tilde{M}\tilde{M}^T$ is an $nk+1$ by $nk+1$ matrix, we have
\[ nk+1 = \mathsf{rank}(\tilde{M}\tilde{M}^T) \leq \mathsf{rank}(\tilde{M}) \leq \min \{ nk+1,N+k\} \leq N+k.\]

\begin{remark}
This proof is again a modification of a familiar proof of Fisher's inequality, namely, the
proof given by Bose \cite{Bose}.
It might in fact be possible to extend this proof to derive Theorem \ref{repeated.thm}, by employing
 Connor's inequalities \cite{Connor}.
\end{remark}

\subsection{Orthogonal vectors}

The last proof of the Plackett-Burman bound that we present is a specialization
of a standard proof of the Rao bound for orthogonal arrays of
arbitrary strength $t$. Assume that $A$ is an $\OA_{\lambda}(k,n)$
defined on the symbol set $\zed_n$. Let the 
columns of $A$ be denoted $C_1, \dots , C_k$. Let $C_0$ be the column
vector of ``$0$''s.
For $1 \leq m \leq n-1$, construct $m C_j$ 
from $C_j$ by multiplying every entry by $m$ (modulo $n$).
Let $\omega = e^{2 \pi i / n}$ (a complex $n$th root of 1) and define
$\phi : \zed_n  \rightarrow \mathbb{C}$ by the rule
$\phi (s) = \omega ^s$.

Now consider the set of $1 + k(n-1)$ vectors
\[ \mathcal{D} =  \{ \phi(C_0)\} \cup  
\{ \phi(m C_j) : 1 \leq m \leq n-1, 1 \leq j \leq k \} .\]
It can be shown easily that $\langle C,D \rangle = 0$ for all $C,D \in \mathcal{D}$,
$C \neq D$, where $\langle \cdot , \cdot \rangle $ denotes the hermitian 
inner product of two (complex-valued) vectors. (This uses the fact that the sum of the 
complex $n$th roots of $1$ equals $0$.)

Since $\mathcal{D}$ consists of mutually orthogonal vectors, 
they are linearly independent. Hence, we have a set of
$1 + k(n-1)$ linearly independent vectors in $\mathbb{C}^N$,
and it follows that $1+ k(n-1) \leq N (= \lambda n^2)$.

\begin{remark}
There does not seem to be any obvious way to modify this proof
to obtain Theorem \ref{repeated.thm}. But it is possible to obtain by this method
the weaker bound stated in Remark \ref{weak.rem}. Suppose that the last $m$ rows of $A$ 
are all $0$'s. Then, in any vector in $\mathcal{D}$, the last $m$ coordinates are identical.
Suppose that we delete the last $m-1$ co-ordinates of each of these vectors, and then multiply the last
(remaining) co-ordinate by $\sqrt{m}$. This will have the effect of preserving the condition $\langle C,D \rangle = 0$
for all the shortened vectors. We end up with $1 + k(n-1)$ linearly independent vectors in $\mathbb{C}^{N-m+1}$.
Therefore, $1+ k(n-1) \leq N-m+1$ and hence $k + N - m \geq kn$.
\end{remark}

\section{OAs of strength $t > 2$}
\label{t>2.sec}

We now consider a generalization. Let $t \geq 2$ be a parameter called the \emph{strength}, and as
before let $k\geq 2$, 
$n \geq 2$ and $\lambda \geq 1$ be integers.
An \emph{orthogonal array} $\OA_{\lambda} (t,k,n)$
is a $\lambda  n^t $ by  $k$ array, $A$, with entries from a set
$X$ of cardinality $n$ such that, within any $t$ columns of $A$,
every $t$-tuple of symbols from $X$ occurs in exactly $\lambda$ rows of $A$.

Observe that an $\OA_{\lambda} (2,k,n)$ is the same thing as an 
$\OA_{\lambda} (k,n)$.  One main bound for $\OA_{\lambda} (2,k,n)$
is the Rao bound from 1947.

\begin{theorem}[Rao bound (\cite{Rao})]
\label{rao.thm}
Let $k \geq 2$, $t \geq 2$, $n \geq 2$ and $\lambda \geq 1$ be integers.
If there is an $\OA_{\lambda}(t, k,n)$, then
\[ \lambda n^t \geq 
\begin{cases}
1+ \sum_{i=1}^{t/2} \binom{k}{i}(n-1)^i & \mbox{if $t$ is even}\\
1+ \sum_{i=1}^{(t-1)/2} \binom{k}{i}(n-1)^i + 
\binom{k-1}{(t-1)/2}(n-1)^{(t+1)/2} & \mbox{if $t$ is odd.}
\end{cases}\]
\end{theorem}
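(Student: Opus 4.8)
The plan is to extend the ``orthogonal vectors'' argument of Section~\ref{OA2.sec} from the strength-two case to arbitrary strength $t$, by exhibiting a large family of mutually orthogonal (hence linearly independent) complex vectors in $\mathbb{C}^N$, where $N = \lambda n^t$. Assume $A$ is defined over $\zed_n$ and identify each of its $N$ rows with a vector $\mathbf{r} \in \zed_n^{\,k}$. For a frequency vector $\mathbf{a} \in \zed_n^{\,k}$, let $\chi_{\mathbf{a}} \in \mathbb{C}^N$ be the character vector whose entry in row $\mathbf{r}$ is $\omega^{\langle \mathbf{a}, \mathbf{r}\rangle}$, where $\omega = e^{2\pi i /n}$ and $\langle \cdot,\cdot\rangle$ is the standard inner product taken modulo $n$. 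The \emph{weight} of $\mathbf{a}$ is its number of nonzero coordinates; note that $\chi_{\mathbf{0}} = \phi(C_0)$ and that the weight-one vectors recover the set $\mathcal{D}$ of the strength-two proof.

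The key step is an orthogonality lemma: if $\mathbf{a} \neq \mathbf{b}$ and $\mathbf{a} - \mathbf{b}$ has weight at most $t$, then $\langle \chi_{\mathbf{a}}, \chi_{\mathbf{b}}\rangle = 0$. To prove this, write the Hermitian inner product as $\sum_{\mathbf{r}} \omega^{\langle \mathbf{a}-\mathbf{b},\,\mathbf{r}\rangle}$ and let $\mathbf{c} = \mathbf{a}-\mathbf{b}$ have support of size $w$ with $1 \leq w \leq t$, on columns $j_1,\dots,j_w$. Since an $\OA_\lambda(t,k,n)$ is also an orthogonal array of strength $w \leq t$, each $w$-tuple occurs $\lambda n^{t-w}$ times in those columns, so the sum factors as $\lambda n^{t-w}\prod_{l}\bigl(\sum_{s\in\zed_n}\omega^{c_{j_l} s}\bigr)$; each inner factor vanishes because $c_{j_l}\not\equiv 0 \pmod n$ and $\omega$ is a primitive $n$th root of unity. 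This is exactly the strength-two character sum of Section~\ref{OA2.sec} carried out in $w$ coordinates simultaneously.

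For even $t$, I would take the family to consist of all $\chi_{\mathbf{a}}$ with $\mathrm{weight}(\mathbf{a}) \leq t/2$. Any two distinct such vectors satisfy $\mathrm{weight}(\mathbf{a}-\mathbf{b}) \leq t$, so the lemma makes them orthogonal; orthogonal nonzero vectors are linearly independent, whence the size of the family is at most $N$. Counting frequency vectors by their support yields $\sum_{i=0}^{t/2}\binom{k}{i}(n-1)^i = 1 + \sum_{i=1}^{t/2}\binom{k}{i}(n-1)^i$, which is the even case of the stated bound.

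The odd case $t = 2P+1$ is the main obstacle, because two weight-$(P+1)$ vectors can differ in as many as $t+1$ coordinates and so need not be orthogonal. The fix is to enlarge the family carefully: include all $\chi_{\mathbf{a}}$ with $\mathrm{weight}(\mathbf{a}) \leq P = (t-1)/2$, together with all $\chi_{\mathbf{a}}$ of weight exactly $P+1$ whose support contains one fixed distinguished column. For any two vectors of this last type, both supports contain the distinguished column, so the supports meet and their union has size at most $2(P+1)-1 = t$; hence $\mathbf{a}-\mathbf{b}$ has weight at most $t$ and the lemma applies. Orthogonality against the lower-weight vectors is immediate, since $(P+1)+P = t$. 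Counting the distinguished-support vectors as $\binom{k-1}{P}(n-1)^{P+1}$ and adding $\sum_{i=0}^{P}\binom{k}{i}(n-1)^i$ gives precisely the odd case. I expect the only delicate point to be this support-intersection bound certifying orthogonality in the enlarged family; the character-sum evaluation and the final dimension count are routine.
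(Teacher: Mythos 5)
Your proof is correct, and it is essentially the approach the paper itself points to: the paper states the Rao bound without proof (citing Rao), but its ``orthogonal vectors'' proof of the Plackett--Burman bound in Section~\ref{other.sec} is explicitly described as the $t=2$ specialization of exactly this character-sum argument. Your generalization --- the orthogonality lemma for differences of weight at most $t$, the weight-$\leq t/2$ family in the even case, and the fix for odd $t$ via weight-$((t+1)/2)$ vectors whose supports share a distinguished column --- is the standard proof and all steps check out.
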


The Plackett-Burman bound (Theorem \ref{PB.thm}) 
is just the special case of the Rao bound with $t=2$.

We are of course interested in the situation where the orthogonal array contains
a row that is repeated $m$ times. We might expect that the Rao bound could 
be improved by a multiplicative factor of $m$ in this case. Indeed, a more
general result of  Mukerjee,  Qian and  Wu \cite{Muk} yields the desired inequality.
The paper \cite{Muk} defines a {\it nested orthogonal array}, which refers to an
$\OA_{\lambda} (t,k,n)$ that contains a subset of rows that is 
an $\OA_{\lambda'} (t,k,n')$. If $n' = 1$ and $\lambda' = m$, then we just have
an $m$-times repeated row in the $\OA_{\lambda} (t,k,n)$.
A necessary condition for the existence of nested orthogonal arrays is
proven in \cite[Theorem 1]{Muk}. Specializing this result to orthogonal
arrays that contain an $m$-times repeated row, we obtain the following:

\begin{theorem}[Mukerjee-Qian-Wu bound (\cite{Muk})]
\label{muk.thm}
Let $k \geq 2$, $t \geq 2$, $n \geq 2$ and $\lambda \geq 1$ be integers.
If there is an $\OA_{\lambda}(t, k,n)$ containing a row that is repeated $m$ times, then
\[ \lambda n^t \geq 
\begin{cases}
m\left(1+ \sum_{i=1}^{t/2} \binom{k}{i}(n-1)^i\right) & \mbox{if $t$ is even}\\
m\left(1+ \sum_{i=1}^{(t-1)/2} \binom{k}{i}(n-1)^i + 
\binom{k-1}{(t-1)/2}(n-1)^{(t+1)/2}\right) & \mbox{if $t$ is odd.}
\end{cases}\]
\end{theorem}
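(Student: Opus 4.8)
The plan is to generalize the ``orthogonal vectors'' proof of the Plackett--Burman bound to strength $t$, and then to extract the factor of $m$ by a reproducing-kernel (localization) argument at the repeated row rather than by mere dimension counting. I work over the symbol set $\zed_n$ and regard the $N=\lambda n^t$ rows of $A$ as a multiset of points of $\zed_n^k$; write $R$ for the row occurring $m$ times. Let $\omega=e^{2\pi i/n}$ and, for $S\subseteq\{1,\dots,k\}$ and $\mathbf{a}\in(\zed_n\setminus\{0\})^S$, set $\chi_{S,\mathbf{a}}(x)=\prod_{j\in S}\omega^{a_jx_j}$, calling $|S|$ the \emph{degree}. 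The one fact supplied by strength $t$ is that every nonconstant character of degree at most $t$ averages to $0$ over the rows of $A$: any $|S|\le t$ columns form an OA, so the sum factors into sums of $n$th roots of unity, each equal to $0$. Writing $\langle f,g\rangle_\mu=\tfrac1N\sum_{\text{rows }r}f(r)\overline{g(r)}$ and letting $\langle\cdot,\cdot\rangle$ be the inner product under the uniform distribution on $\zed_n^k$, this says $\langle f,g\rangle_\mu=\langle f,g\rangle$ whenever $f\overline{g}$ is a combination of characters of degree at most $t$.

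First I would fix a ``test space'' $W$ of functions, chosen so that $\dim W$ equals the Rao count $D$ on the right-hand side and so that $\langle\cdot,\cdot\rangle_\mu$ agrees with $\langle\cdot,\cdot\rangle$ on all of $W$. With $s=\lfloor t/2\rfloor$, let $W$ be spanned by all characters of degree at most $s$; in the odd case $t=2s+1$, augment this by the degree-$(s+1)$ characters $\chi_{S\cup\{k\},\mathbf{a}}$ with $S\subseteq\{1,\dots,k-1\}$ and $|S|=s$, namely those degree-$(s+1)$ characters that use the fixed coordinate $k$. A direct count gives $\dim W=D$ in both parities. The purpose of this particular augmentation is that the product of any two chosen basis characters still has degree at most $t$: two characters of degree $\le s$ give degree $\le 2s\le t$; a degree-$\le s$ character times an augmenting one gives degree $\le t$; and two augmenting characters share coordinate $k$, on which their product collapses to a \emph{single} character, so the product has degree at most $1+2s=t$. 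Hence $\langle f,g\rangle_\mu=\langle f,g\rangle$ for all $f,g\in W$.

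The crux is the localization step. Let $K(x,y)=\sum_\alpha\psi_\alpha(x)\overline{\psi_\alpha(y)}$ be the reproducing kernel of $W$ for an orthonormal basis $\{\psi_\alpha\}$. Because the basis characters all have modulus $1$, the diagonal $K(x,x)=\sum_\alpha|\psi_\alpha(x)|^2$ is the constant $\dim W=D$; in particular $K(R,R)=D$. Put $h=K(R,\cdot)\in W$, so that $h(R)=K(R,R)=D$ and $\langle h,h\rangle=K(R,R)=D$ by the reproducing property. Evaluating the squared norm over the rows of $A$ and using the agreement of the two inner products on $W$ gives $\sum_{\text{rows }r}|h(r)|^2=N\langle h,h\rangle=ND$. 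On the other hand, the $m$ copies of $R$ alone contribute $m\,|h(R)|^2=mD^2$, and every remaining term is nonnegative, so $ND\ge mD^2$, i.e. $N\ge mD$. Since $N=\lambda n^t$ and $D$ equals the stated Rao count, this is exactly the inequality of Theorem~\ref{muk.thm}; setting $m=1$ recovers the Rao bound (Theorem~\ref{rao.thm}), and setting $t=2$ recovers Theorem~\ref{repeated.thm}.

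I expect the even case to be essentially automatic once the setup is in place; the main obstacle is the odd case, and specifically the verification that the augmenting characters can be chosen so that (i) their pairwise products stay within degree $t$ and (ii) the kernel diagonal remains the constant $\dim W$. Both hinge on selecting augmenting characters that share a common coordinate and take values of modulus $1$, which is exactly what forces $K(R,R)=\dim W$ rather than merely equalling it on average. It is worth observing that this is precisely the refinement that the remark following the ``orthogonal vectors'' proof found elusive: the same mutually orthogonal evaluation vectors are used, but the factor of $m$ now emerges from the quantitative identity $\sum_r|h(r)|^2=ND$ together with the contribution of the single repeated point, rather than from counting dimensions after deleting coordinates.
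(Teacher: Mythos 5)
Your argument is correct, and it is genuinely different from what the paper does: the paper offers no proof of Theorem~\ref{muk.thm} at all, stating it as a specialization of \cite[Theorem 1]{Muk} and remarking both that that proof is complicated and that there is no obvious way to extend the standard (dimension-counting) proof of the Rao bound to this setting. Your reproducing-kernel localization supplies exactly the missing quantitative step: the basis characters are pairwise orthogonal under the row-average inner product because every product of two of them has degree at most $t$ (your choice of augmenting characters through the fixed coordinate $k$ in the odd case is the standard device from the Rao bound proof and does keep products within degree $t$, since the two supports share coordinate $k$ and so their union has size at most $2s+1=t$); the function $h=K(R,\cdot)$ then satisfies $\sum_r |h(r)|^2 = N\langle h,h\rangle = ND$ while the $m$ copies of $R$ alone contribute $m|h(R)|^2 = mD^2$, giving $N \geq mD$. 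I checked the dimension counts in both parities against the Rao expressions and the modulus-one property forcing $K(x,x)\equiv D$; all are right. Two cosmetic points: whether you take $h=K(R,\cdot)$ or $K(\cdot,R)$ is immaterial since $W$ is closed under conjugation, and the phrase that two augmenting characters ``collapse to a single character'' on coordinate $k$ is loose --- what you actually need (and have) is only that the support of the product is contained in a set of size at most $t$. What your route buys is substantial: a short, self-contained proof that simultaneously recovers the Rao bound ($m=1$), Theorem~\ref{repeated.thm} ($t=2$), and directly answers the difficulty raised after the ``orthogonal vectors'' proof in Section~\ref{other.sec}, where only the weaker bound $k+N-m\geq kn$ could be extracted by deleting coordinates; the factor of $m$ here comes from evaluating the kernel at the repeated row rather than from counting dimensions.
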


The proof of the Rao bound (Theorem \ref{rao.thm}) is not difficult. Unfortunately, the proof of
\cite[Theorem 1]{Muk} is much more complicated. It is remarked in \cite{Muk} that there is no obvious
way to generalize the standard proof of the Rao bound to nested orthogonal arrays.

\section{BIBDs and $t$-designs}
\label{t-designs.sec}

So far, we have concentrated on bounds for
orthogonal arrays.  It may be of interest to look at ``analogous'' 
inequalities for BIBDs (i.e., $2$-designs) and $t$-designs
for general values of $t\geq 2$. In our discussion,  
$b$ will denote the number of blocks in a $t$-$(v,k,\lambda)$-design.

The following summarizes some of the main known results.

\begin{enumerate}
\item Fisher's inequality 
for $2$-designs (proven in 1940; see \cite{Fisher}) states that if a $2$-$(v,k,\lambda)$-design  exists
with $v \geq k+1$, then 
$b \geq v$.  There are by now many proofs of this inequality, several of which have been
mentioned in Sections \ref{OA2.sec} and \ref{other.sec}.
\item The Ray-Chaudhuri and Wilson inequality \cite{RCM} asserts that 
if a $t$-$(v,k,\lambda)$-design exists with $t\geq 2s$ and $v \geq k+s$, then
$b \geq \binom{v}{s}$. (This result of course generalizes Fisher's inequality, and we also 
remark that the case $t=4$ was first proven by Petrenjuk \cite{Pet}, who conjectured the inequality
proven by Ray-Chaudhuri and Wilson.) As we mentioned earlier, the proof given in \cite{RCM} uses a technique
similar to the one in Section \ref{TD.sec}. Some other proofs can be found in 
\cite[Theorem 1]{Wil2}, \cite[Theorem 1.4.1]{Godsil} and \cite[Theorem 3.4]{MS}.
\item Mann's inequality \cite{Mann}, which was proven in 1969, states that if a $2$-$(v,k,\lambda)$-design 
with $v \geq k+1$ has a block that is repeated $m$ times, then $b \geq mv$.
(The special case $m=1$ is just Fisher's inequality). Mann's proof was an application of the
variance method. Interestingly, a slightly weaker inequality was proven a few years earlier,
using the same method, by Stanton and Sprott \cite{StSp}.

For another proof of Mann's inequality, see
\cite{vLR}. It is also well-known that Mann's inequality can be derived as a consequence of 
Connor's inequalities  \cite{Connor}, which were proven in 1952; see, for example, Brouwer \cite[p.\ 700]{Brouwer}.
\item Finally, Wilson (\cite[Corollary 3]{Wil2} and \cite[Corollary 4]{Wil}) proved that 
if there exists a $t$-$(v,k,\lambda)$-design with $t\geq 2s$ 
and $v \geq k+s$, which has a block that is repeated $m$ times, then $b \geq m\binom{v}{s}$.
Wilson's proofs use methods of orthogonal projections.
For another proof of this result, based on the so-called ``cone condition'', see \cite{DD}.

This result of course immediately implies the three previous results.
However, its proofs are substantially more complicated than the proofs of the three previous results.
\end{enumerate}

\section*{Acknowledgement}
I would like to thank Sophie Toulouse for bringing this problem to my attention.
Thanks also go to the referees for helpful comments and suggestions.

\end{document}